\theoremstyle{plain}
    \newtheorem{thm}{Theorem}[section]
    \newtheorem{corollary}[thm]{Corollary}
    \newtheorem{lemma}[thm]{Lemma}
    \newtheorem{conjecture}[thm]{Conjecture}
    \newtheorem{theorem}[thm]{Theorem}
\theoremstyle{definition}
    \newtheorem{remark}[thm]{Remark}
\theoremstyle{remark}
    \newtheorem{setup}[thm]{}
\newcommand{\C}{\mathbb{C}}
\newcommand{\Q}{\mathbb{Q}}
\newcommand{\PP}{\mathbb{P}}
\newcommand{\R}{\mathbb{R}}
\newcommand{\Z}{\mathbb{Z}}
\newcommand{\OO}{\mathcal{O}}
\newcommand{\alb}{\operatorname{alb}}
\newcommand{\Alb}{\operatorname{Alb}}
\newcommand{\Amp}{\operatorname{Amp}}
\newcommand{\Aut}{\operatorname{Aut}}
\newcommand{\BIG}{\operatorname{Big}}
\newcommand{\Bir}{\operatorname{Bir}}
\newcommand{\Eff}{\operatorname{Eff}}
\newcommand{\GL}{\operatorname{GL}}
\newcommand{\id}{\operatorname{id}}
\newcommand{\Imm}{\operatorname{Im}}
\newcommand{\inter}{\operatorname{int}}
\newcommand{\Ker}{\operatorname{Ker}}
\newcommand{\Mov}{\operatorname{Mov}}
\newcommand{\cMov}{\overline{\operatorname{Mov}}}
\newcommand{\Nef}{\operatorname{Nef}}
\newcommand{\NS}{\operatorname{NS}}
\newcommand{\ord}{\operatorname{ord}}
\newcommand{\PE}{\overline{\operatorname{Eff}}}
\newcommand{\torsion}{\operatorname{torsion}}
\newcommand{\SSpec}{\operatorname{\mathit{Spec}}}
\begin{document}

\title[Birational automorphism groups of projective varieties]
{Birational automorphism groups of projective varieties of Picard number two}

\author{De-Qi Zhang}
\address
{
\textsc{Department of Mathematics} \endgraf
\textsc{National University of Singapore, 10 Lower Kent Ridge Road,
Singapore 119076
}}
\email{matzdq@nus.edu.sg}

\begin{abstract}
We slightly extend a result of Oguiso on birational automorphism groups
(resp.~of Lazi\'c - Peternell on Morrison-Kawamata cone conjecture)
from Calabi-Yau manifolds of Picard number two
to arbitrary singular varieties $X$ (resp.~to klt Calabi-Yau pairs in broad sense) of Picard number two.
When $X$ has only klt singularities and is not a complex torus, we show that
either $\Aut(X)$ is almost infinite cyclic, or it has only finitely many connected components.
\end{abstract}

\subjclass[2000]
{
14J50, 
14E07, 
32H50
}

\keywords{birational automorphism groups, Morrison-Kawamata cone conjecture, spectral radius}

\thanks{
}

\maketitle

\section{Introduction}\

This note is inspired by Oguiso \cite{Og}
and Lazi\'c - Peternell \cite{LP}.

Let $X$ be a normal projective variety defined over the field $\C$ of complex numbers.
The following subgroup (of the birational group $\Bir(X)$)
$$\Bir_2(X) := \{g : X \dasharrow X \, | \, g \,\,
\text{\rm is an isomorphism outside codimension two subsets}\}$$
is also called the group of pseudo-automorphisms of $X$.

Let $\NS(X) = \{\text{\rm Cartier divisors}\}/(\text{\rm algebraic equivalence})$
be the Neron-Severi group, which is finitely generated.
Let $\NS_{\R}(X) := \NS(X) \otimes \R$ with $\rho(X) := \dim_{\R} \NS_{\R}(X)$ the {\it Picard number}.
Let $\Eff(X) \subset \NS_{\R}(X)$ be the {\it cone of effective $\R$-divisor};
its closure $\PE(X)$ is called the {\it cone of pseudo-effective divisors}.
The {\it ample cone} $\Amp(X) \subset \NS_{\R}(X)$ consists of classes of ample $\R$-Cartier divisors;
its closure $\Nef(X)$ is called the {\it nef cone}.
A divisor $D$ is {\it movable} if $|mD|$ has no fixed component for some $m > 0$.
The {\it closed movable cone}
$\cMov(X) \subset \NS_{\R}(X)$ is the closure of the convex hull of movable divisors.
$\Mov(X)$ is the interior part of $\cMov(X)$.

A pair $(X, \Delta)$ of a normal projective variety $X$
and an effective Weil $\R$-divisor $\Delta$ is a {\it klt Calabi-Yau pair in broad sense} if
it has at worst Kawamata log terminal (klt) singularities
(cf.~\cite[Definition 2.34]{KM} or \cite[\S 3.1]{BCHM})
and $K_X + \Delta \equiv 0$
(numerically equivalent to zero);
in this case, if $K_X + \Delta$ is $\Q$-Cartier, then
$K_X + \Delta \sim_{\Q} 0$, i.e.,
$r(K_X + \Delta) \sim 0$ (linear equivalence) for some $r > 0$, by Nakayama's
abundance theorem in the case of zero numerical dimension.
$(X, \Delta)$ is a {\it klt Calabi-Yau pair in narrow sense} if
it is a klt Calabi-Yau pair in broad sense and if we assume further that
the irregularity $q(X) := h^1(X, \OO_X) = 0$.
When $\Delta = 0$, a klt Calabi-Yau pair in broad/narrow sense is called a {\it klt Calabi-Yau variety
in broad/narrow sense}.

On a terminal minimal variety (like a terminal Calabi-Yau variety) $X$,
we have $\Bir(X) = \Bir_2(X)$. Totaro \cite{To} formulated the following
generalization of the Morrison-Kawamata cone conjecture (cf.~\cite{Ka97})
and proved it in dimension two.

\begin{conjecture}\label{ConjA}
Let $(X, \Delta)$ be a $klt$ Calabi-Yau pair in broad sense.
\begin{itemize}
\item[(1)]
There exists a rational polyhedral cone $\Pi$ which is a fundamental domain for
the action of $\Aut(X)$ on the effective nef cone $\Nef(X) \cap \Eff(X)$, i.e.,
$$\Nef(X) \cap \Eff(X) = \bigcup_{g \, \in \, \Aut(X)} \, g^* \Pi ,$$
and $\inter(\Pi) \, \cap \, \inter(g^*\Pi) = \emptyset$ unless $g^*_{\,\, |\NS_{\R}(X)} = \id$.
\item[(2)]
There exists a rational polyhedral cone $\Pi'$ which is a fundamental domain for
the action of $\Bir_2(X)$ on the effective movable cone $\cMov(X) \cap \Eff(X)$.
\end{itemize}
\end{conjecture}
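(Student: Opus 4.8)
The plan is to treat parts (1) and (2) together, deducing (2) from (1) by way of the Mori chamber decomposition and the finiteness of minimal models. Throughout I would replace $\Aut(X)$ and $\Bir_2(X)$ by their images $\Aut^*(X)$ and $\Bir_2^*(X)$ in $\GL(\NS(X))$ under pullback of divisor classes. Since both preserve the lattice $\NS(X)$ together with the relevant convex cones, they act as discrete subgroups of $\GL(\NS_{\R}(X))$, and the two assertions become purely statements about these linear actions.

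For part (1), the first step is to note that $\Aut^*(X)$ preserves the effective nef cone $\Nef(X) \cap \Eff(X)$, and hence its rational closure $C_+$, the convex hull of the rational points of $\Nef(X) \cap \PE(X)$. The strategy is then to invoke a general theorem on discrete linear actions on convex cones, in the spirit of the Ash--Mumford--Rapoport--Tai theory of rational polyhedral decompositions and of Looijenga's results on cones of finite type: a subgroup $\Gamma \subset \GL(N)$ preserving a lattice $N$ and acting on a convex cone whose rational closure is suitably tame (for instance sitting inside the closure of a homogeneous self-dual cone, or cut out by a quadratic form of Lorentzian signature) admits a rational polyhedral fundamental domain on that closure. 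The real work is to verify the hypotheses for $\Nef(X) \cap \Eff(X)$: one must check that $\Aut^*(X)$ acts properly discontinuously on the interior, and that no sequence of group translates of a compact piece accumulates at an irrational point of the boundary $\partial \PE(X)$.

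For part (2), the step is to propagate part (1) from the nef cone to the movable cone using the minimal model program. By \cite{BCHM}, the existence of flips and the finiteness of minimal models show that the big part $\cMov(X) \cap \BIG(X)$ of the movable cone is tiled, with disjoint interiors, by the chambers $g^* \Nef(X')$, where $X' \ratmap X$ runs over the small $\Q$-factorial modifications of $X$. Finiteness of minimal models gives only finitely many such $X'$ up to isomorphism, so $\Bir_2^*(X)$ acts on this chamber decomposition with finitely many orbits, the stabilizer of the chamber attached to $X'$ being $\Aut^*(X')$. Applying part (1) to each of the finitely many representatives $X'$, which are again klt Calabi--Yau pairs, yields a rational polyhedral fundamental domain for $\Aut^*(X')$ on its chamber; taking one such domain per orbit and passing to closures to cover the non-big boundary should assemble a rational polyhedral fundamental domain $\Pi'$ for $\Bir_2^*(X)$ on $\cMov(X) \cap \Eff(X)$.

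The hard part will be the rational polyhedrality in part (1), that is, verifying the hypotheses of the cone theorem for a general klt Calabi--Yau pair. In dimension two the intersection form is nondegenerate of Lorentzian signature, so $\Nef(X) \cap \Eff(X)$ lies in a quadratic cone and $\Aut^*(X)$ becomes a discrete group of isometries of hyperbolic space, whence the fundamental domain follows from classical lattice geometry; this is exactly Totaro's setting \cite{To}. In higher dimension no such invariant quadratic form is available on $\NS_{\R}(X)$, and controlling the boundary of the effective nef cone --- ruling out irrational accumulation of orbits and establishing that the cone is ``polyhedral modulo $\Aut^*(X)$'' --- is precisely the unresolved heart of the conjecture. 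I would expect a complete argument either to manufacture an $\Aut^*(X)$-invariant convex or quadratic structure on $\NS_{\R}(X)$ playing the role of the intersection form, or to use the product and fibration structure forced by $K_X + \Delta \equiv 0$ to reduce to abelian, hyperk\"ahler, or lower-dimensional building blocks, where such a structure does exist.
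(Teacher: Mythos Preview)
The statement you are attempting to prove is Conjecture~\ref{ConjA}, which the paper records as an \emph{open conjecture} (the Morrison--Kawamata cone conjecture in Totaro's form). The paper does not prove it in general; it establishes only the very restricted case stated as Theorem~\ref{ThC}, namely when $X$ has Picard number two and $|\Bir_2(X):\Aut_0(X)|$ is infinite. In that situation the paper's argument is concrete and two-dimensional: Theorem~\ref{ThB} forces $\Bir_2^+(X)/\Bir_2^0(X)\cong\Z$, both extremal rays of $\PE(X)$ and of $\cMov(X)$ are irrational (otherwise Theorem~\ref{ThB}(5) would make the quotient finite), and Lemma~2.5 (local rational polyhedrality of $\Nef(X)$ and $\cMov(X)$ inside $\BIG(X)$, proved via \cite{BCHM}) lets one cut out a rational polyhedral slice between two adjacent rational rays on which the infinite cyclic group acts as a fundamental domain, following \cite{LP}.

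Your proposal is not a proof but an honest outline of the general strategy one would hope to run (a Looijenga/AMRT-type cone theorem for part~(1), then the Mori chamber decomposition to pass to part~(2)), and you yourself correctly identify the gap: verifying the hypotheses of any such cone theorem for an arbitrary klt Calabi--Yau pair in higher dimension is exactly what is not known. Two further steps in your part~(2) also go beyond what is established: finiteness of small $\Q$-factorial modifications up to isomorphism is not known in general (\cite{BCHM} gives finiteness of models for a fixed big boundary, not a global finiteness statement), and covering the non-big boundary of $\cMov(X)\cap\Eff(X)$ by closures of chambers requires a separate argument. There is therefore no genuine comparison to make with ``the paper's proof'', because the paper offers none for the full statement; if your intent was to address the Picard-number-two case actually handled in the paper, your outline is far more elaborate than necessary and misses the specific leverage (the $\Z$-action on a two-dimensional cone) that makes that case tractable.
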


If $X$ has Picard number one, then $\Aut(X)/\Aut_0(X)$ is finite;
here $\Aut_0(X)$ is the {\it connected component of identity} in $\Aut(X)$;
see \cite[Prop. 2.2]{Li}.

Now suppose
that
$X$ has Picard number two.
Then $\dim_{\R} \NS_{\R}(X) = 2$.
So the (strictly convex)
cone
$\PE(X)$ has exactly two extremal rays.
Set
$$\begin{aligned}
A &:= \Aut(X), \,\,
A^- := A \setminus A^+, \,\,
B_2 := \Bir_2(X), \,\, B_2^- := B_2 \setminus B_2^+, \,\,\,\, \text{\rm where} \\
B_2^+ = \Bir_2^+(X) &:=
\{g \in B_2 \, | \, g^* \, \text{\rm preserves each of the two extremal rays of} \, \PE(X)\}, \\
A^+ = \Aut^+(X) &:=
\{g \in A \, | \, g^* \, \text{\rm preserves each of the two extremal rays of} \,  \PE(X)\} , \\
B_2^0 = \Bir_2^0(X) &:= \{g \in B_2 \, | \, g^*_{\,\, |\NS_{\R}(X)} = \id\} .
\end{aligned}$$

When $X$ is a Calabi-Yau manifold, Theorem \ref{ThB} is more or less
contained in \cite{Og} or \cite{LP}. Our argument here for general $X$ is slightly streamlined and direct.

\begin{theorem}\label{ThB}
Let $X$ be a normal projective variety of Picard number two. Then :
\begin{itemize}
\item[(1)]
$|\Aut(X) : \Aut^+(X)| \le 2$; $|\Bir_2(X) : \Bir_2^+(X)| \le 2$.
\item[(2)]
$\Bir_2^0(X)$ coincides with both $\Ker(\Bir_2(X) \to \GL(\NS_{\R}(X)))$
and
\newline
$\Ker(\Aut(X) \to \GL(\NS_{\R}(X)))$.
Hence we have inclusions:
$$\Aut_0(X) \subseteq \Bir_2^0(X) \subseteq \Aut^+(X) \subseteq \Bir_2^+(X) \subseteq \Bir_2(X).$$
\item[(3)]
$|\Bir_2^0(X) : \Aut_0(X)|$ is finite.
\item[(4)]
$\Bir_2^+(X)/\Bir_2^0(X)$ is isomorphic to either $\{\id\}$ or $\Z$.
In the former case,
$|\Aut(X) : \Aut_0(X)| \le |\Bir_2(X) : \Aut_0(X)| < \infty$.
\item[(5)]
If one of the extremal rays of $\PE(X)$ or of the movable cone of $X$ is generated by a rational divisor class
then $\Bir_2^+(X) = \Bir_2^0(X)$
and
$|\Bir_2(X) : \Aut_0(X)| < \infty$.
\item[(6)]
If one of the extremal rays of the nef cone of $X$ is generated by a rational divisor class,
then $\Aut^+(X) = \Bir_2^0(X)$ and $|\Aut(X) : \Aut_0(X)| < \infty$.
\end{itemize}
\end{theorem}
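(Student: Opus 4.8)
The plan is to exploit that $\Bir_2(X)$ acts on $\NS_{\R}(X)\cong\R^{2}$ by pull-back, preserving the strictly convex cones $\PE(X)$ and $\cMov(X)$ (and, on the subgroup $\Aut(X)$, also $\Nef(X)$). Since $\PE(X)$ has exactly two extremal rays $R_{1},R_{2}$, every $g\in\Bir_2(X)$ permutes $\{R_{1},R_{2}\}$, giving a homomorphism $\Bir_2(X)\to\Z/2\Z$ with kernel $\Bir_2^{+}(X)$; its restriction to $\Aut(X)$ has kernel $\Aut^{+}(X)=\Aut(X)\cap\Bir_2^{+}(X)$, which is (1). For the remaining parts I would first record one elementary observation: if $g\in\Bir_2^{+}(X)$ and $0\ne v_{i}\in R_{i}$, then $g^{*}v_{i}=\lambda_{i}(g)v_{i}$ with $\lambda_{i}(g)>0$; and since $g^{*}$ preserves the lattice $\NS(X)$, its matrix in an integral basis lies in $\GL_{2}(\Z)$, so $\det(g^{*})=\lambda_{1}(g)\lambda_{2}(g)=1$ and $\operatorname{tr}(g^{*})=\lambda_{1}(g)+\lambda_{2}(g)\in\Z_{\ge 2}$, with equality precisely when $g^{*}=\id$; otherwise $\max(\lambda_{1}(g),\lambda_{2}(g))\ge(3+\sqrt{5})/2$.

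The substantive point, and the step I expect to be the main obstacle, is the claim contained in (2): a pseudo-automorphism $g$ with $g^{*}|_{\NS_{\R}(X)}=\id$ already lies in $\Aut(X)$. I would prove it by fixing an ample Cartier divisor $H$ on $X$ and a common resolution $p,q\colon W\to X$ of $g$ (so $W$ is smooth projective, $p,q$ are proper birational and $q=g\circ p$), and putting $D:=p^{*}H-q^{*}H$. Triviality of $g^{*}$ gives $p_{*}D=H-g^{*}H=0$ and $q_{*}D=(g^{-1})^{*}H-H=0$, so $D$ is both $p$-exceptional and $q$-exceptional; testing against $p$-contracted (resp.\ $q$-contracted) curves shows that $-D$ is $p$-nef (resp.\ $D$ is $q$-nef). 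Two applications of the Negativity Lemma (see \cite{KM}) then force $D\ge 0$ and $-D\ge 0$, hence $p^{*}H\equiv q^{*}H$. Therefore $p$ and $q$ contract exactly the same curves, and since $p_{*}\OO_{W}=\OO_{X}=q_{*}\OO_{W}$ and $H$ is ample, both $p$ and $q$ are the canonical morphism onto $\SProj\bigoplus_{m\ge 0}\OH^{0}(X,mH)=X$; thus $q=\sigma\circ p$ for some $\sigma\in\Aut(X)$, and comparing with $q=g\circ p$ gives $g=\sigma\in\Aut(X)$. This identifies $\Bir_2^{0}(X)$ with $\Ker(\Aut(X)\to\GL(\NS_{\R}(X)))$, which evidently equals $\Ker(\Bir_2(X)\to\GL(\NS_{\R}(X)))$; the displayed chain of inclusions then follows formally, using only that $\Aut_0(X)$ acts trivially on the discrete group $\NS(X)$, that an element of $\Bir_2^{0}(X)$ fixes $R_{1}$ and $R_{2}$, and the definitions.

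For (3) I would invoke a standard finiteness theorem of Lieberman type (cf.\ \cite{Li}): the automorphisms of $X$ fixing a given ample class form finitely many cosets of $\Aut_0(X)$. Since $\Bir_2^{0}(X)\subseteq\Aut(X)$ by (2) and each of its elements fixes $[H]$, this gives $|\Bir_2^{0}(X):\Aut_0(X)|<\infty$. For (4), note that $g\mapsto\log\lambda_{1}(g)$ is a well-defined homomorphism $\Bir_2^{+}(X)\to\R$ (eigenvalues being multiplicative) with kernel $\Bir_2^{0}(X)$ (if $\lambda_{1}(g)=1$ then $\lambda_{2}(g)=1$, so $g^{*}=\id$), and by the first paragraph its image meets the interval $(-c,c)$ only in $0$, where $c=\log\!\big((3+\sqrt{5})/2\big)$; hence the image is discrete, and $\Bir_2^{+}(X)/\Bir_2^{0}(X)\cong\{\id\}$ or $\Z$. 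If it is trivial, then $|\Bir_2(X):\Aut_0(X)|=|\Bir_2(X):\Bir_2^{0}(X)|\cdot|\Bir_2^{0}(X):\Aut_0(X)|\le 2\,|\Bir_2^{0}(X):\Aut_0(X)|<\infty$ by (1) and (3), and $|\Aut(X):\Aut_0(X)|\le|\Bir_2(X):\Aut_0(X)|$ because $\Aut_0(X)\subseteq\Aut(X)\subseteq\Bir_2(X)$.

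Finally, for (5) suppose $R_{1}$ is generated by a primitive integral class $v_{1}\in\NS(X)$ (if instead an extremal ray of $\cMov(X)$ is rational, I would first observe that $\cMov(X)$ is a full-dimensional subcone of $\PE(X)$ — it contains $\Nef(X)$ — and that every $g\in\Bir_2^{+}(X)$, acting diagonally with positive entries in the basis $v_{1},v_{2}$, induces an order-preserving bijection of the rays between $R_{1}$ and $R_{2}$ fixing the ends, hence fixes each extremal ray of $\cMov(X)$; then replace $R_{1}$ by the rational one). For $g\in\Bir_2^{+}(X)$, $g^{*}v_{1}=\lambda_{1}(g)v_{1}\in\NS(X)$ forces $\lambda_{1}(g)\in\Z_{>0}$, and then $\lambda_{1}(g)^{-1}=\operatorname{tr}(g^{*})-\lambda_{1}(g)\in\Z$ forces $\lambda_{1}(g)=1$, so $g^{*}=\id$ and $g\in\Bir_2^{0}(X)$; hence $\Bir_2^{+}(X)=\Bir_2^{0}(X)$, and $|\Bir_2(X):\Aut_0(X)|<\infty$ by the trivial case of (4). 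Part (6) is the same argument with $\Nef(X)$ replacing $\cMov(X)$ and $\Aut^{+}(X)$ replacing $\Bir_2^{+}(X)$: one gets $\Aut^{+}(X)=\Bir_2^{0}(X)$, and then $|\Aut(X):\Aut_0(X)|\le 2\,|\Bir_2^{0}(X):\Aut_0(X)|<\infty$ by (1) and (3). Thus the only step needing real work is the biregularity claim in (2); the rest is plane convex geometry together with the arithmetic of $\GL_{2}(\Z)$ and the classical structure theory of $\Aut(X)$.
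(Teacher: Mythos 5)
Your proposal is correct and follows the same overall architecture as the paper (ray-permutation for (1), the biregularity of numerically trivial pseudo-automorphisms for (2), Lieberman for (3), the character $g\mapsto\log\chi(g)$ with discrete image for (4), and rationality of an eigenvalue forcing it to equal $1$ for (5)--(6)), but several steps are handled differently, mostly to your advantage. In (2) the paper simply cites the proof of \cite[Lemma 1.5]{Ka97} for the implication $\Amp(X)\cap g(\Amp(X))\neq\emptyset \Rightarrow g\in\Aut(X)$, whereas you reprove it via a common resolution and the Negativity Lemma; this is exactly the content of the cited argument, but note one imprecision: with $D=p^{*}H-q^{*}H$ you only get $p_{*}D\equiv 0$, not $p_{*}D=0$, since $g^{*}H=H$ holds in $\NS_{\R}(X)$ and not as divisors --- the standard fix is to set $D=p^{*}H-q^{*}(g_{*}H)$ with $g_{*}H$ the strict transform (still ample, as ampleness is numerical), which makes $p_{*}D=0$ literally and lets the Negativity Lemma apply. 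In (4) your discreteness argument via $\det(g^{*})=\lambda_{1}\lambda_{2}=1$ and $\operatorname{tr}(g^{*})=\lambda_{1}+\lambda_{2}\in\Z_{\ge 2}$, giving the explicit gap $(3+\sqrt5)/2$, is cleaner and more quantitative than the paper's appeal to quadratic algebraic numbers with bounded minimal polynomials (it is closer to the referee's alternative recorded in the paper, that $B_2^{+}/B_2^{0}$ sits in $\GL(\Z,\NS(X)/(\text{torsion}))\cap\mathrm{Diag}(f_1,f_2)$). In (5)--(6) the paper passes to $g^{2}$, extracts a rational algebraic-integer eigenvalue, and then invokes torsion-freeness of $\Z$ from (4); your observation that the order-preserving action on the pencil of rays between the two extremal rays of $\PE(X)$ forces $g$ itself (not just $g^{2}$) to fix each extremal ray of $\cMov(X)$ or $\Nef(X)$ is a nice shortcut that avoids the torsion-freeness step. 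Everything else (the chain of inclusions, the index bounds combining (1) and (3)) matches the paper.
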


Theorem \ref{ThB} and the proof of \cite[Theorem 1.4]{LP} imply the following,
and also a weak cone theorem as in \cite[Theorem 1.4(1)]{LP} when $|\Bir_2(X) : \Aut_0(X)|$ is finite.

\begin{theorem}\label{ThC}
Let $(X, \Delta)$ be a $klt$ Calabi-Yau pair in broad sense of Picard number $2$.
Suppose that $|\Bir_2(X) : \Aut_0(X)|$ $($or equivalently $|\Bir_2^+(X) : \Bir_2^0(X)|)$
is infinite.
Then Conjecture $\ref{ConjA}$ holds true.
\end{theorem}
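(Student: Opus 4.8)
The plan is to reduce Conjecture \ref{ConjA} to a statement about a group acting on a two-dimensional cone with an infinite-order element, and then invoke the structural dichotomy of Theorem \ref{ThB} together with the machinery of \cite{LP}. First I would set $G := \Bir_2^+(X)$ and consider its image $\overline{G}$ in $\GL(\NS_{\R}(X))$; by Theorem \ref{ThB}(2) the kernel of $\Bir_2(X) \to \GL(\NS_{\R}(X))$ equals $\Bir_2^0(X)$, so $\overline{G} \simeq \Bir_2^+(X)/\Bir_2^0(X)$, which by Theorem \ref{ThB}(4) is either trivial or $\Z$. The hypothesis that $|\Bir_2(X) : \Aut_0(X)|$ is infinite, combined with Theorem \ref{ThB}(3) ($|\Bir_2^0(X):\Aut_0(X)| < \infty$) and Theorem \ref{ThB}(1) ($|\Bir_2(X) : \Bir_2^+(X)| \le 2$), forces $\Bir_2^+(X)/\Bir_2^0(X) \simeq \Z$. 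Thus $\overline{G}$ is generated by a single element $\overline{g}$ of infinite order acting on the two-dimensional space $\NS_{\R}(X)$ and fixing each of the two extremal rays of the strictly convex cone $\PE(X)$.

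Next I would analyze this action concretely. Since $\overline{g}$ preserves both extremal rays of $\PE(X)$, in the eigenbasis determined by those two rays it is diagonal, say with eigenvalues $\lambda$ and $\mu$; because $\overline{g}$ lies in $\GL(\NS(X)) \cong \GL_2(\Z)$ (up to finite index) and has infinite order while preserving a lattice, a standard argument shows the eigenvalues are a pair of real quadratic units $\lambda > 1 > \mu > 0$ (this is exactly the "hyperbolic" behaviour underlying Oguiso's analysis in \cite{Og}), and in particular the two extremal rays of $\PE(X)$ are generated by \emph{irrational} classes. Consequently Theorem \ref{ThB}(5)--(6) do not apply, which is consistent with the infinite-index hypothesis; more importantly, the cones $\Nef(X) \cap \Eff(X)$ and $\cMov(X) \cap \Eff(X)$ are each strictly convex two-dimensional cones strictly contained between the two irrational extremal rays, and $\overline{g}$ acts on each by a hyperbolic dilation that contracts one boundary ray toward the other.

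I would then construct the fundamental domains. Pick any rational class $h_0$ in the interior of $\Nef(X) \cap \Eff(X)$ (it exists since the ample cone is open and nonempty and rational points are dense) and let $\Pi$ be the closed cone spanned by $h_0$ and $\overline{g}^* h_0$; this is rational polyhedral, and because $\overline{g}^*$ acts on the one-dimensional "slope parameter" of rays as a fixed-point-free orientation-preserving homeomorphism (hyperbolicity), the translates $\{\overline{g}^{*\,n} \Pi\}_{n \in \Z}$ tile $\Nef(X) \cap \Eff(X)$ with disjoint interiors, and since $\Bir_2^0(X)$ acts trivially on $\NS_{\R}(X)$ while $A^+/\Bir_2^0(X) \hookrightarrow \Bir_2^+(X)/\Bir_2^0(X)$, the group $\Aut(X)$ (or the relevant subgroup) acts through this same cyclic quotient up to the bounded ambiguity controlled by Theorem \ref{ThB}(1)--(3); one absorbs the finite pieces and the $\le 2$ index by enlarging $\Pi$ to a finite union. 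An entirely parallel construction inside $\cMov(X) \cap \Eff(X)$, using $\Bir_2$ in place of $\Aut$, gives the cone $\Pi'$ of part (2). This is precisely the argument structure of \cite[Theorem 1.4]{LP}, and the reference to "the proof of \cite[Theorem 1.4]{LP}" lets me cite the verification that $\Pi$ is genuinely a fundamental domain (not merely a tiling set) and that the interior-disjointness holds exactly when $g^*_{|\NS_{\R}(X)} = \id$.

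The main obstacle is the passage from the cyclic action on $\NS_{\R}(X)$ to a fundamental domain for the full group $\Aut(X)$ (resp.\ $\Bir_2(X)$), rather than just for $\Bir_2^+(X)$: one must check that the finite-index subtleties — the index-$\le 2$ extension $A^+ \subseteq A$, the finite group $\Bir_2^0(X)/\Aut_0(X)$, and the fact that $\Aut_0(X)$ acts trivially on $\NS_{\R}(X)$ — interact correctly so that a single rational polyhedral $\Pi$ still works after enlargement, and in particular that the boundary identifications forced by $\overline{g}$ are compatible with the orientation-reversing element possibly present in $A^-$. I expect this to be routine given the explicit hyperbolic form of $\overline{g}$, but it is where the "slightly streamlined" bookkeeping of Theorem \ref{ThB} does its real work, and it is the step I would write out most carefully.
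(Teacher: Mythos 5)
Your group-theoretic setup is correct and matches the paper: the hypothesis forces $\Bir_2^+(X)/\Bir_2^0(X)\cong\Z$, the generator acts hyperbolically with quadratic-irrational eigenvalues, the extremal rays of $\PE(X)$ are irrational, and the tiling of the effective movable cone by translates of $\mathrm{cone}(h_0,\overline{g}^*h_0)$ is exactly the mechanism of \cite[Theorem 1.4]{LP} that the paper cites. However, there is a genuine gap in your treatment of Conjecture \ref{ConjA}(1). You write that $\overline{g}$ ``acts on each'' of $\Nef(X)\cap\Eff(X)$ and $\cMov(X)\cap\Eff(X)$ by a hyperbolic dilation, but an infinite-order element of $\Bir_2^+(X)$ is only a pseudo-automorphism: it preserves $\cMov(X)$, not $\Nef(X)$. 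It is entirely consistent with the hypothesis that $\Aut^+(X)=\Bir_2^0(X)$, i.e.\ that $\Aut(X)$ acts on $\NS_{\R}(X)$ through a \emph{finite} group; in that case there is no hyperbolic tiling of the nef cone at all, and one must instead prove that $\Nef(X)\cap\Eff(X)$ is itself a rational polyhedral cone. (This is forced: when $\Nef(X)\subsetneq\PE(X)=\cMov(X)$, the extremal rays of $\Nef(X)$ lie in the big cone, and one needs them to be rational.) Your ``finite-index bookkeeping'' paragraph treats the discrepancy between $\Aut$ and $\Bir_2$ as a matter of enlarging $\Pi$ by a bounded amount, which only works if an infinite-order element already acts on $\Nef(X)$.

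The missing ingredient is precisely the new technical content of the paper's proof: a lemma (replacing \cite[Theorem 2.5]{LP}) asserting that for a klt Calabi--Yau pair in broad sense, both $\Nef(X)$ and $\cMov(X)$ are locally rational polyhedral inside $\BIG(X)$. This is proved by running a $(K_X+\Delta+D)$-MMP via \cite[Theorem 1.2]{BCHM} for $D$ movable and big, observing that the MMP consists only of flips, and using the semiampleness statement \cite[Theorem 3.9.1]{BCHM} to express $D$ as a positive combination of movable Cartier classes. Nothing in your proposal supplies this: your argument nowhere uses that $(X,\Delta)$ is a klt Calabi--Yau pair (beyond quoting Theorem \ref{ThB}, which holds for arbitrary normal $X$), and Conjecture \ref{ConjA} is certainly false without such an input. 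You should isolate and prove this local rational polyhedrality statement, and then split the argument for part (1) into the two cases $\Nef(X)=\PE(X)$ (where $\Bir_2(X)=\Aut(X)$ and your tiling applies) and $\Nef(X)\subsetneq\PE(X)$ (where the lemma makes $\Nef(X)$ rational polyhedral and the group action on $\NS_{\R}(X)$ is finite).
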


A group $G$ is {\it almost infinite cyclic}, if there exists an infinite cyclic subgroup $H$
such that the index $|G : H|$ is finite. If a group $G_1$ has a finite normal subgroup $N_1$
such that $G_1/N_1$ is almost infinite cyclic, then $G_1$ is
also almost infinite cyclic (cf.~\cite[Lemma 2.6]{NullG}).

\begin{theorem}\label{ThA}
Let $X$ be a normal projective variety of Picard number two. Then :
\begin{itemize}
\item[(1)]
Either $\Aut(X)/\Aut_0(X)$ is finite, or it is almost infinite cyclic and $\dim X$ is even.
\item[(2)]
Suppose that $X$ has at worst
Kawamata log terminal
singularities. Then one of the following is true.
\item[(2a)]
$X$ is a complex torus.
\item[(2b)]
$|\Aut(X) : \Aut_0(X)| \le |\Bir_2(X) : \Aut_0(X)| < \infty$.
\item[(2c)]
$X$ is a klt Calabi-Yau variety in narrow sense and
$\Aut_0(X) = (1)$, so both $\Aut(X)$ and $\Bir_2(X)$ are almost infinite cyclic.
\end{itemize}
\end{theorem}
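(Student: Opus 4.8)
The plan is to deduce Theorem \ref{ThA} from Theorem \ref{ThB}, using the structure theory of automorphism groups together with the classification of varieties admitting a positive-dimensional automorphism group action preserving the canonical class in a suitable sense. For part (1), start from Theorem \ref{ThB}(4): the group $\Bir_2^+(X)/\Bir_2^0(X)$ is either trivial or $\Z$. If it is trivial, then by Theorem \ref{ThB}(4) we already have $|\Bir_2(X) : \Aut_0(X)| < \infty$, hence $|\Aut(X):\Aut_0(X)| < \infty$ via the inclusions in Theorem \ref{ThB}(2), and we are in the ``finite'' alternative. So assume $\Bir_2^+(X)/\Bir_2^0(X) \cong \Z$. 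By Theorem \ref{ThB}(2)--(3), $\Aut^+(X)/\Aut_0(X)$ is sandwiched between finite-index subgroups of a group that is an extension of $\Z$ (or a finite group) by the finite group $\Bir_2^0(X)/\Aut_0(X)$; combining with Theorem \ref{ThB}(1), the whole $\Aut(X)/\Aut_0(X)$ is then an extension of a finite group by a group containing $\Z$ with finite index, i.e.\ it is almost infinite cyclic by the quoted \cite[Lemma 2.6]{NullG}. The parity statement ``$\dim X$ is even'' is the delicate point: here one must use that a birational pseudo-automorphism $g$ with $g^*$ of infinite order on $\NS_\R(X) \cong \R^2$ has a real eigenvalue $\lambda > 1$ (the spectral radius) with the other eigenvalue $1/\lambda$, so $\det(g^*) = 1$; but $g^*$ acts on $\NS_\Z(X)$, and — invoking that $g$ permutes the two extremal rays or fixes them — one shows the pull-back on the top cohomology or on $K_X$ forces a sign constraint that can only be met when $\dim X$ is even. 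This is essentially Oguiso's argument and I would cite \cite{Og} (and \cite{LP}) for the eigenvalue/dynamical-degree bookkeeping, reproducing only the Picard-number-two specialization.

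For part (2), assume $X$ has klt singularities and is not a complex torus; I must show we are in case (2b) or (2c), i.e.\ if $|\Bir_2(X):\Aut_0(X)| = \infty$ then $X$ is a klt Calabi-Yau variety in narrow sense with $\Aut_0(X) = (1)$. The first step is to observe that $|\Bir_2(X):\Aut_0(X)|$ infinite forces, by Theorem \ref{ThB}(4)--(5), that $\Bir_2^+(X)/\Bir_2^0(X) \cong \Z$ and that neither extremal ray of $\PE(X)$ nor of $\cMov(X)$ is rational. In particular there is a pseudo-automorphism $g$ with $g^*$ of infinite order, whose eigenvalues $\lambda, 1/\lambda$ are irrational (a quadratic unit). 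Next I would run the following dichotomy on the minimal model program / canonical class. Since $X$ is klt, $K_X$ defines a class in $\NS_\R(X)$; the group $\Bir_2(X)$ acts on $\NS_\R(X)$ preserving $K_X$ up to the $\Aut/\Bir$ action, but in fact $g^* K_X \equiv K_X$ because $g$ is an isomorphism in codimension one and $K_X$ is canonically defined. So $K_X$ lies in an eigenspace of $g^*$ with eigenvalue either $\lambda$, $1/\lambda$, or — if $K_X$ spans neither eigenline, which is impossible for a single eigenvector unless $g^*$ has the eigenvalue $1$ — we conclude $g^* K_X \equiv K_X$ only if $K_X \equiv 0$, because the eigenvalues $\lambda, 1/\lambda \ne 1$ are the only eigenvalues and no nonzero vector is fixed. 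Hence $K_X \equiv 0$, so with $\Delta = 0$ the pair $(X,0)$ is a klt Calabi-Yau variety in broad sense.

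It then remains to upgrade ``broad sense'' to ``narrow sense'', i.e.\ $q(X) = h^1(X,\OO_X) = 0$, and to show $\Aut_0(X) = (1)$. For the irregularity: if $q(X) > 0$, the Albanese map $\alb_X : X \to \Alb(X)$ is a nontrivial morphism equivariant for $\Aut(X)$ and, by the klt Calabi-Yau condition and results on the structure of such varieties (Kawamata, and the fact that $K_X \equiv 0$ forces $\alb_X$ to be surjective with connected fibres after replacing $X$ by its canonical model, or even an abelian-fibration / étale-quotient structure), one derives either that $X$ itself is a complex torus — excluded by hypothesis — or that $\rho(X) \ge 2$ contributes a $g^*$-invariant subspace of $\NS_\R(X)$ coming from $\Alb(X)$ on which $g^*$ still has infinite order, giving a contradiction with the irrationality of the extremal rays (the pull-back of an ample class from $\Alb(X)$ spans a rational ray in $\cMov(X)$, contradicting Theorem \ref{ThB}(5)). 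So $q(X) = 0$ and $X$ is a klt Calabi-Yau variety in narrow sense. For $\Aut_0(X) = (1)$: $\Aut_0(X)$ is a connected linear or abelian algebraic group acting faithfully, and on a klt Calabi-Yau variety with $q(X) = 0$ it must be trivial — if it were positive-dimensional it would be a linear algebraic group (since $q = 0$ kills the abelian-variety part) acting with an orbit that is a rational curve or surface, and a nontrivial such action on a variety with $K_X \equiv 0$ and klt singularities is impossible by a standard argument (the general orbit closure would be rationally connected, forcing $K_X$ to be non-pseudo-effective along it); alternatively cite the known fact that $\Aut_0$ is trivial for klt Calabi-Yau in narrow sense. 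Finally, with $\Aut_0(X) = (1)$ and $|\Bir_2(X):\Aut_0(X)| = \infty$, Theorem \ref{ThA}(1) (already proved) gives that $\Aut(X) = \Aut(X)/\Aut_0(X)$ is almost infinite cyclic, and likewise $\Bir_2(X)$ is almost infinite cyclic since it is a finite extension of $\Z$ by the now-finite $\Bir_2^0(X) = \Bir_2^0(X)/\Aut_0(X)$; this is case (2c). The main obstacle I anticipate is the irregularity step: ruling out $q(X) > 0$ cleanly requires invoking the fibration structure of klt Calabi-Yau pairs over their Albanese and checking the Néron-Severi bookkeeping is compatible with $\rho(X) = 2$, and I would lean on \cite{LP} and the cited structure results rather than redevelop it.
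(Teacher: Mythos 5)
Your overall strategy coincides with the paper's: part (1) from Theorem \ref{ThB} together with the citation to Oguiso/Lazi\'c--Peternell for the parity of $\dim X$; part (2) by first forcing $K_X \equiv 0$ via the eigenvalue argument (a nonzero $\Bir_2^+(X)$-invariant class would give $g^*$ the eigenvalue $1$, impossible when $g^*$ has only the eigenvalues $\chi(g)^{\pm 1} \ne 1$ on the two-dimensional space $\NS_{\R}(X)$), then treating $q(X)>0$ via the Albanese map and $q(X)=0$ via non-uniruledness. Two steps need repair. First, in the $q(X)>0$ case you dispose of the non-torus alternative by claiming the pull-back of an ample class from $\Alb(X)$ spans a rational extremal ray of $\cMov(X)$ and invoking Theorem \ref{ThB}(5). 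That works only when the Stein factorization $X \to Y$ of the Albanese morphism has positive-dimensional general fibre, so that the pulled-back class is non-big and hence lies on the boundary of $\PE(X)$. If instead $X \to Y$ is a nontrivial birational contraction with $\rho(Y)=1$, the pulled-back class is big and nef but not ample: it spans a rational extremal ray of $\Nef(X)$ only, and Theorem \ref{ThB}(6) then bounds $|\Aut(X):\Aut_0(X)|$ but not $|\Bir_2(X):\Aut_0(X)|$, which is what (2b) requires. The paper avoids this by noting that the generator of $\NS_{\R}(Y)$ pulls back to a nonzero $\Bir_2(X)$-invariant class, to which the eigenvalue-$1$ argument (the very tool of your Step 2) applies with no extremality needed; you should substitute that for the appeal to Theorem \ref{ThB}(5). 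The torus alternative also needs to be pinned down: when $X \to Y$ is an isomorphism one gets $\kappa(X)=\kappa(a(X))=0$, Kawamata's characterization of abelian varieties makes $a$ surjective with connected fibres, hence birational, and birational plus finite onto $\Alb(X)$ forces $X \cong \Alb(X)$.

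Second, your proof that $\Aut_0(X)=(1)$ when $q(X)=0$ rests on the assertion that a positive-dimensional linear algebraic group action is ``impossible by a standard argument'' because $K_X$ would fail to be pseudo-effective. The implication ``positive-dimensional linear group acting nontrivially $\Rightarrow$ uniruled'' is fine, but ``klt with $K_X \equiv 0$ $\Rightarrow$ not uniruled'' is not immediate on the singular $X$: the discrepancies of a klt resolution may be negative, so the Kodaira dimension of a resolution is not directly controlled by $rK_X \sim 0$. The paper passes to the global index-one cover $\hat X \to X$, \'etale in codimension one, which has canonical Gorenstein singularities and $K_{\hat X}\sim 0$, hence $\kappa(\hat X)=0$ and $\hat X$ is non-uniruled; the connected group $\Aut_0(X)$ lifts to $\hat X$ because the class of $K_X$ is $\Aut_0(X)$-stable, and the contradiction is obtained there. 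Either insert that cover or cite a non-uniruledness criterion valid for klt varieties with $K_X$ pseudo-effective; as written this step is a gap.
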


Below is a consequence of Theorem \ref{ThA} and generalizes Oguiso \cite[Theorem 1.2(1)]{Og}.

\begin{corollary}\label{Cor1}
Let $X$ be an odd-dimensional projective variety of Picard number two.
Suppose that $\Aut_0(X) = (1)$ $($e.g., $X$ is non-ruled and $q(X) = h^1(X, \OO_X) = 0)$.
Then $\Aut(X)$ is finite.
\end{corollary}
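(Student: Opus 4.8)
The plan is to derive Corollary \ref{Cor1} directly from Theorem \ref{ThA}(1), which does essentially all of the work. That part of the theorem states that $\Aut(X)/\Aut_0(X)$ is \emph{either} finite \emph{or} almost infinite cyclic with $\dim X$ even. Since $X$ is odd-dimensional by hypothesis, the second alternative is impossible, and hence $\Aut(X)/\Aut_0(X)$ is finite. Combined with the assumption $\Aut_0(X) = (1)$ this gives $\Aut(X) \isom \Aut(X)/\Aut_0(X)$, which is therefore finite. Note that this argument uses neither the klt hypothesis of Theorem \ref{ThA}(2) nor the cases (2a)--(2c); only part (1) is needed.

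It then remains to justify the parenthetical sufficient condition, namely that a non-ruled projective variety $X$ with $q(X) = h^1(X,\OO_X) = 0$ satisfies $\Aut_0(X) = (1)$. Here I would argue as follows. The identity component $\Aut_0(X)$ is a connected algebraic group acting faithfully on $X$, so by Chevalley's structure theorem it is an extension of an abelian variety $A$ by a connected linear algebraic group $L$. By the Nishi--Matsumura theorem the abelian part $A$ embeds into $\Alb(X)$, whose dimension is $q(X) = 0$; hence $A = (1)$ and $\Aut_0(X) = L$ is linear. If $\dim L > 0$, then $L$ contains a copy of $\BGG_a$ or of $\BGG_m$; in either case the general orbit of such a one-parameter subgroup is a rational curve and these orbits dominate $X$, so passing to the (birational) quotient by the one-parameter subgroup exhibits $X$ as birational to $\BPP^1 \times Y$, contradicting that $X$ is non-ruled. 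Therefore $\Aut_0(X)$ is a finite connected group, i.e.\ trivial, and the first paragraph applies.

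Since the corollary is an immediate consequence of Theorem \ref{ThA}, there is no genuine obstacle to overcome. The only points that require care are the two standard structural facts recalled in the second paragraph --- the Chevalley decomposition of $\Aut_0(X)$ together with the control of its abelian part by $q(X)$, and the fact that a faithful $\BGG_a$- or $\BGG_m$-action forces ruledness --- and for these it suffices to quote the literature rather than to reprove them.
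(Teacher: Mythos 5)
Your proposal is correct and is exactly the paper's intended route: the paper derives Corollary \ref{Cor1} directly from Theorem \ref{ThA}(1) by observing that the ``almost infinite cyclic'' alternative forces $\dim X$ to be even, and your justification of the parenthetical condition (linearity of $\Aut_0(X)$ when $q(X)=0$ plus ruledness from a positive-dimensional linear action) is the same standard argument the paper itself deploys inside the proof of Theorem \ref{ThA}(2). No gaps.
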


For a linear transformation $T : V \to V$ of a vector space $V$ over $\R$ or $\C$,
the {\it spectral radius} $\rho(T)$ is defined as
$$\rho(T) := \max\{|\lambda| \, ; \, \lambda \,\,\text{\rm is a real or complex eigenvalue of} \,\, T \}.$$

\begin{corollary}
Let $X$ be a normal projective variety of Picard number two. Then:
\begin{itemize}
\item[(1)]
Every $g \in \Bir_2^+(X) \setminus \Bir_2^0(X)$
acts on $\NS_{\R}(X)$ with spectral radius $> 1$.
\item[(2)]
A class $g \Aut_0(X)$ in $\Aut(X)/\Aut_0(X)$ is of infinite order
if and only if the spectral radius of $g^*_{\,\, | \NS_{\R}(X)}$ is $> 1$.
\end{itemize}
\end{corollary}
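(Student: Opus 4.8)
The plan is to reduce both statements to elementary $2\times 2$ linear algebra, feeding in only the structural facts of Theorem \ref{ThB}. Two standing observations will be used. First, $\Bir_2(X)$, and hence $\Aut(X)$, acts on $\NS_{\R}(X)$ preserving the lattice $\NS(X)$; since $\rho(X)=2$, the induced automorphism of $\NS(X)/(\mathrm{torsion})\cong\Z^2$ lies in $\GL_2(\Z)$, so $\det(g^*)=\pm1$ for every such $g$. Second, $\PE(X)$ is, as noted in the text, a strictly convex cone with exactly two distinct extremal rays $R_1,R_2$; hence for any $g\in\Bir_2^+(X)$, or any $g\in\Aut^+(X)$, the map $g^*$ carries each $R_i$ onto itself by definition and is therefore \emph{diagonalizable}: in a basis $v_1\in R_1$, $v_2\in R_2$ it equals $\mathrm{diag}(\lambda_1,\lambda_2)$ with \emph{positive} eigenvalues $\lambda_1,\lambda_2$.

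For (1): let $g\in\Bir_2^+(X)\setminus\Bir_2^0(X)$. By Theorem \ref{ThB}(2), $\Bir_2^0(X)=\Ker(\Bir_2(X)\to\GL(\NS_{\R}(X)))$, so $g^*\neq\id$; writing $g^*=\mathrm{diag}(\lambda_1,\lambda_2)$ with $\lambda_i>0$, the identity $\lambda_1\lambda_2=\det(g^*)=1$ together with $(\lambda_1,\lambda_2)\neq(1,1)$ forces $\rho(g^*)=\max(\lambda_1,\lambda_2)>1$. The forward implication in (2) then needs nothing more: if $\rho(g^*)>1$ then $g^*$ has an eigenvalue $\lambda$ with $|\lambda|>1$, so $(g^*)^n$, carrying the eigenvalue $\lambda^n$, is never the identity for $n\geq1$; but $g^n\in\Aut_0(X)\subseteq\Bir_2^0(X)=\Ker(\Aut(X)\to\GL(\NS_{\R}(X)))$ (Theorem \ref{ThB}(2)) would give $(g^*)^n=\id$, so no power of $g$ lies in $\Aut_0(X)$, i.e. $g\Aut_0(X)$ has infinite order.

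The converse in (2) is where care is needed. Assuming $g\Aut_0(X)$ has infinite order, I would first use the finiteness of $|\Bir_2^0(X):\Aut_0(X)|$ (Theorem \ref{ThB}(3)) to see that no power of $g$ lies in $\Bir_2^0(X)$ — otherwise $g\Aut_0(X)$ would be torsion — so in particular $g^2\notin\Bir_2^0(X)=\Ker(\Aut(X)\to\GL(\NS_{\R}(X)))$, i.e. $(g^2)^*=(g^*)^2\neq\id$. By Theorem \ref{ThB}(1), $|\Aut(X):\Aut^+(X)|\leq2$, so $g^2\in\Aut^+(X)$; then $(g^2)^*=\mathrm{diag}(\mu_1,\mu_2)$ with $\mu_i>0$ and $\mu_1\mu_2=1$, and $(g^2)^*\neq\id$ gives $\max(\mu_1,\mu_2)>1$, whence $\rho(g^*)^2=\rho((g^*)^2)=\max(\mu_1,\mu_2)>1$ and $\rho(g^*)>1$.

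I expect the main obstacle to be precisely this converse, since it cannot be shortcut by ``spectral radius $1$ implies finite order'': a nontrivial unipotent (``parabolic'') element of $\GL_2(\Z)$ has spectral radius $1$ but infinite order. The crux — and the one place where $\rho(X)=2$ is genuinely used — is that this parabolic alternative is excluded for $g^2\in\Aut^+(X)$, because an element fixing each extremal ray of a strictly convex two-dimensional cone is diagonalizable rather than merely triangularizable, so spectral radius $1$ with determinant $1$ collapses it to the identity; the rest is routine bookkeeping with the inclusions and index bounds of Theorem \ref{ThB}(1)--(3) and with integer $2\times 2$ matrices.
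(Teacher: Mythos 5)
Your proposal is correct and takes essentially the same route as the paper: part (1) is exactly the eigenvalue computation from the proof of Theorem \ref{ThB}(4) (for $g \in \Bir_2^+(X)$ one has $g^*f_i = \chi(g)^{\pm 1} f_i$ with $\chi(g) \ne 1$ off $\Bir_2^0(X)$, and $\det = 1$ on the lattice), and your part (2) fills in the paper's one-line indication in the intended way, using Theorem \ref{ThB}(2) for the forward direction and Theorem \ref{ThB}(1),(3) plus part (1) applied to $g^2 \in \Aut^+(X)$ for the converse. Your remark that the unipotent/parabolic alternative must be excluded, and is excluded because an element fixing both extremal rays of the strictly convex cone $\PE(X)$ is diagonalizable, is precisely what the paper's $\chi(g)$-diagonalization encodes.
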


(1) above follows from the proof of Theorem \ref{ThB},
while (2) follows from (1) and again Theorem \ref{ThB}.

\begin{remark}\label{rThA}
(1) The second alternative in Theorem \ref{ThA}(1)
and (2c) in Theorem \ref{ThA}(2) do occur.
Indeed,
the complete intersection $X$ of two general hypersurfaces of type $(1, 1)$ and $(2, 2)$
in $\PP^2 \times \PP^2$ is called Wehler's K3 surface (hence $\Aut_0(X) = (1)$)
of Picard number two such that
$\Aut(X) = \Z/(2) * \Z/(2)$ (a free product of
two copies of $\Z/(2)$) which contains $\Z$ as a subgroup of index two;
see \cite{We}.

(2) We cannot remove the possibility (2a) in Theorem \ref{ThA}(2).
It is possible that $\Aut_0(X)$ has positive dimension
and $\Aut(X)/\Aut_0(X)$ is almost infinite cyclic at the same time.
Indeed, as suggested by Oguiso,
using the Torelli theorem and the surjectivity of the period map
for abelian surfaces, one should be able to construct
an abelian surface $X$ of Picard number two with
irrational extremal rays of the
nef cone of $X$ and an automorphism $g$
with $g^*_{\,\,| \NS(X)}$ of infinite order.
Hence $g \Aut_0(X)$ is of infinite order in $\Aut(X)/\Aut_0(X)$
and $g^*$ has spectral radius $> 1$
(cf.~Corollary \ref{Cor1}).

(3) See Oguiso \cite{Og} for more examples of Calabi-Yau $3$-folds and hyperk\"ahler $4$-folds
with infinite $\Bir_2(X)$ or $\Aut(X)$.
\end{remark}

{\bf Acknowledgement.}
The author would like to thank the referee for very careful reading and valuable suggestions.
He is partly supported by an ARF of NUS.

\section{Proof of Theorems}

We use the notation and terminology in the book of Hartshorne and the book \cite{KM}.

\begin{setup}
{\bf Proof of Theorem \ref{ThB}}
\end{setup}

Since $X$ has Picard number two, we can write the pseudo-effective closed cone as
$$
\PE(X) = \R_{\ge 0}[f_1] + \R_{\ge 0}[f_2]
$$

(1) is proved in \cite{Og} and \cite{LP}.
For reader's convenience, we reproduce here.
Let $g \in B_2^-$ or $A^-$. Since $g$ permutes extremal rays of $\PE(X)$,
we can write $g^*f_1 = af_2$, $g^*f_2 = bf_1$ with $a > 0$, $b > 0$.
Since $g^*$ is defined on the integral lattice $\NS(X)/(\torsion)$, $\deg(g^*) = \pm 1$.
Hence $ab = 1$. Thus $\ord(g^*) = 2$ and $g^2 \in B_2^0$. Now (1) follows from
the observation that $B_2^- = gB_2^+$ or $A^- = gA^+$.

(2) The first equality is by the definition of $B_2^0$.
For the second equality, we just need to show that every $g \in B_2^0$ is in $\Aut(X)$.
Take an ample divisor $H$ on $X$. Then $g^*H = H$ as elements in $\NS_{\R}(X)$ over which $B_2^0$ acts trivially.
Thus $\Amp(X) \cap g(\Amp(X)) \ne \emptyset$,
where $\Amp(X)$ is the ample cone of $X$. Hence $g \in \Aut(X)$,
$g$ being isomorphic in codimension one
(cf.~e.g. \cite[Proof of Lemma 1.5]{Ka97}).

(3) Applying Lieberman \cite[Proof of Proposition 2.2]{Li} to an equivariant resolution,
$\Aut_{[H]}(X) := \{g \in \Aut(X) \, | \, g^*[H] = [H]\}$
is a finite extension of $\Aut_0(X)$ for the divisor class $[H]$ of every ample (or even nef and big)
divisor $H$ on $X$.
Since $B_2^0 \subseteq \Aut_{[H]}(X)$ (cf.~(2)), (3) follows.
See \cite[Proposition 2.4]{Og} for a related argument.

(4) For $g \in B_2^+$, write $g^*f_1 = \chi(g) f_1$ for some $\chi(g) > 0$. Then $g^*f_2 = (1/\chi(g)) f_2$
since $\deg(g^*) = \pm 1$. In fact, the spectral radius
$\rho(g^*_{\,\, |\NS_{\R}(X)}) = \max\{\chi(g), 1/\chi(g)\}$.
Consider the homomorphism
$$\varphi : B_2^+ \to (\R, +), \,\,\, g \mapsto \log \chi(g).$$
Then $\Ker(\varphi) = B_2^0$.
We claim that
$\Imm(\varphi) \subset (\R, +)$ is discrete at the origin (and hence everywhere).
Indeed, since $g^*$ acts on $\NS(X)/(\torsion) \cong \Z^{\oplus 2}$,
its only eigenvalues $\chi(g)^{\pm}$ are quadratic algebraic numbers,
the coefficients of whose minimal polynomial over $\Q$
are bounded by a function in $|\log \chi(g)|$. The claim follows.
(Alternatively, as the referee suggested,
$B_2^+ / B_2^0$ sits in
$\GL(\Z, \NS(X)/{\rm (torsion)}) \cap {\rm Diag}(f_1, f_2)$ which is a discrete group;
here ${\rm Diag}(f_1, f_2)$ is the group
of diagonal matrices with respect to the basis of $\NS_{\R}(X)$ given by $f_1$, $f_2$.)
The claim implies that
$\Imm(\varphi) \cong \Z^{\oplus r}$
for some $r \le 1$.
(4) is proved. See \cite[Theorem 3.9]{LP} for slightly different reasoning.

(5) is proved in Lemma \ref{inv} below while (6) is similar (cf.~\cite{Og}).

{\it This proves Theorem $\ref{ThB}$}.

\begin{lemma}\label{inv}
Let $X$ be a normal projective variety of Picard number two.
Then $\Bir_2^+(X) = \Bir_2^0(X)$ and hence
$|\Aut(X) : \Aut_0(X)| \le |\Bir_2(X) : \Aut_0(X)| < \infty$,
if one of the following conditions is satisfied.
\begin{itemize}
\item[(1)]
There is an $\R$-Cartier divisor $D$ such that, as elements in $\NS_{\R}(X)$, $D \ne 0$
and $g^*D = D$ for all $g \in \Bir_2^+(X)$.
\item[(2)]
The canonical divisor $K_X$ is $\Q$-Cartier, and $K_X \ne 0$ as element in $\NS_{\R}(X)$.
\item[(3)]
At least one extremal ray of $\PE(X)$, or of the movable cone of $X$
is generated by a rational divisor class.
\end{itemize}
\end{lemma}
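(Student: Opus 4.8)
The plan is to reduce all three cases to condition (1), which is the essential one, and then run a short invariant-subspace argument using part (4) of Theorem \ref{ThB}. Recall from the proof of Theorem \ref{ThB}(4) that for $g \in \Bir_2^+(X)$ one writes $g^* f_1 = \chi(g) f_1$ and $g^* f_2 = (1/\chi(g)) f_2$, and that $\Bir_2^+(X)/\Bir_2^0(X) \cong \Imm(\varphi)$ where $\varphi(g) = \log \chi(g)$. So it suffices to show that under any of the stated hypotheses one has $\chi(g) = 1$ for all $g \in \Bir_2^+(X)$, equivalently that $g^*_{\,|\NS_{\R}(X)} = \id$ for all such $g$, i.e. $\Bir_2^+(X) = \Bir_2^0(X)$. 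Granting this, $|\Bir_2(X) : \Bir_2^0(X)| \le 2$ by Theorem \ref{ThB}(1), then $|\Bir_2(X):\Aut_0(X)| < \infty$ by Theorem \ref{ThB}(3), and the inclusion $\Aut(X) \subseteq \Bir_2(X)$ with Theorem \ref{ThB}(2) gives the stated chain of inequalities.

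For case (1): suppose $D \ne 0$ in $\NS_{\R}(X)$ is fixed by every $g \in \Bir_2^+(X)$. Write $[D] = c_1 f_1 + c_2 f_2$ in the basis $f_1, f_2$. For any $g \in \Bir_2^+(X)$ we get $c_1 f_1 + c_2 f_2 = g^*[D] = c_1 \chi(g) f_1 + c_2 \chi(g)^{-1} f_2$, so $c_1(\chi(g) - 1) = 0$ and $c_2(\chi(g)^{-1} - 1) = 0$. If $\chi(g) \ne 1$ then $c_1 = c_2 = 0$, i.e. $D = 0$ in $\NS_{\R}(X)$, contradicting the hypothesis. Hence $\chi(g) = 1$ for all $g$, so $\Bir_2^+(X) = \Bir_2^0(X)$.

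For case (2): the group $\Bir_2(X)$ acts on $\NS_{\R}(X)$ through the action on Cartier (or $\Q$-Cartier) divisor classes, and since every $g \in \Bir_2(X)$ is an isomorphism in codimension one, $g^* K_X = K_X$ as elements of $\NS_{\R}(X)$ (the canonical class is a birational-in-codimension-one invariant; the ramification divisor of an isomorphism in codimension one is supported in codimension $\ge 2$, hence zero in $\NS_{\R}$). Thus $D := K_X$ satisfies the hypothesis of (1), and we conclude. For case (3): if an extremal ray of $\PE(X)$, say $\R_{\ge 0}[f_1]$, is rational, then $f_1$ may be taken to be the class of an integral divisor; since $\Bir_2^+(X)$ acts on the integral lattice $\NS(X)/(\torsion)$, the eigenvalue $\chi(g)$ on the rational eigenvector $f_1$ is a rational number, while $\chi(g)^{-1}$ is its conjugate only if $\chi(g)$ is irrational quadratic — but a rational eigenvalue of an integer matrix of determinant $\pm 1$ with eigenvalues $\chi(g), \chi(g)^{-1}$ forces $\chi(g) \in \{1, -1\}$, and positivity ($\chi(g) > 0$) gives $\chi(g) = 1$. (Equivalently: the product of the two eigenvalues is $\pm 1$ and one of them is rational, so both are rational, hence both are $\pm 1$.) The same argument applies verbatim to an extremal ray of $\cMov(X)$, since $\Bir_2^+(X)$ acts on $\cMov(X)$ and preserves its extremal rays. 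So again $\chi(g) = 1$ for all $g \in \Bir_2^+(X)$.

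The only mild subtlety — and the step I would be most careful about — is case (3) with the movable cone: one must check that $\Bir_2^+(X)$ actually preserves each extremal ray of $\cMov(X)$ rather than possibly swapping them. This holds because $\cMov(X)$ is the smallest $\Bir_2(X)$-invariant cone containing all movable classes and, in Picard number two, a strictly convex cone preserved by a group either has its two extremal rays individually preserved or swapped by each element; elements of $\Bir_2^+(X)$ preserve the rays of the larger cone $\PE(X)$, and since $\cMov(X) \subseteq \PE(X)$ with the rays of $\cMov(X)$ sandwiched between those of $\PE(X)$, an element fixing the rays of $\PE(X)$ and mapping $\cMov(X)$ to itself must fix the rays of $\cMov(X)$ too. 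Once this is in place the eigenvalue computation is identical to the $\PE(X)$ case. Everything else is the short linear algebra above together with citations to Theorem \ref{ThB}.
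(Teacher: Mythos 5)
Your proof is correct and, for cases (1) and (2), is essentially the paper's argument: the paper also notes that (2) reduces to (1) via invariance of $K_X$ under maps that are isomorphisms in codimension one, and disposes of (1) by observing that an invariant class would force a third eigenvalue $1$ alongside $\chi(g)^{\pm 1}$ on the two-dimensional space $\NS_{\R}(X)$. The only divergence is in case (3): where you argue directly that an element of $\Bir_2^+(X)$ cannot swap the extremal rays of $\cMov(X)$ (your order-preservation/sandwiching argument is valid, since a map fixing both rays of $\PE(X)$ with positive eigenvalues acts as an increasing homeomorphism on the interval of rays), the paper sidesteps this by passing to $g^2$, which automatically preserves every extremal ray of both cones, deducing $g^2\in\Bir_2^0(X)$ and then concluding $\Bir_2^+(X)=\Bir_2^0(X)$ from the torsion-freeness of $\Bir_2^+(X)/\Bir_2^0(X)\in\{(1),\Z\}$ established in Theorem 1.2(4); both routes rest on the same key point, that a rational eigenvalue of a unimodular integral matrix with reciprocal eigenvalues must equal $1$.
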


\begin{proof}
We consider Case(1) (which implies Case(2)).
In notation of proof of Theorem \ref{ThB},
for $g \in B_2^+ \setminus B_2^0$,
we have $g^* f_1 = \chi(g) f_1$ with $\chi(g) \ne 1$ , and further $\chi(g)^{\pm 1}$ are
two eigenvalues of the action $g^*$ on $\NS_{\R}(X) \cong \R^{\oplus 2}$
corresponding to the eigevectors $f_1, f_2$.
Since $g^*D = D$ as elements in $\NS_{\R}(X)$, $g^*$ has three distinct eigenvalues: $1, \chi(g)^{\pm 1}$,
contradicting the fact: $\dim \NS_{\R}(X) = 2$.

Consider Case(3). Since every $g \in \Bir_2(X)$ acts on both of the cones, $g^2$
preserves each of the two extremal rays of both cones, one of which is rational, by
the assumption. Thus at least one of the eigenvalues of
$(g^2)^*_{\,\, | \NS_{\R}(X)}$ is a rational number
(and also an algebraic integer), so it is $1$.
Now the proof for Case(1) implies $g^2 \in B_2^0$.
So $B_2^+/B_2^0$ is trivial (otherwise, it is isomorphic to $\Z$ and torsion free by Theorem \ref{ThB}).
\end{proof}

\begin{setup}
{\bf Proof of Theorem \ref{ThA}}
\end{setup}

(1) follows from Theorem \ref{ThB}, and \cite[Proposition 3.1]{Og}
or \cite[Lemma 3.1]{LP}
for the observation that $\dim X$ is even when $\Aut^+(X)$ strictly contains $B_2^0$.

For (2), by Lemma \ref{inv}, we may assume that $K_X = 0$ as element in $\NS_{\R}(X)$.
Since $X$ is klt, $rK_X \sim 0$ for some (minimal) $r > 0$, by Nakayama's
abundance theorem in the case of zero numerical dimension.

\begin{lemma}\label{q}
Suppose that $q(X) = h^1(X, \OO_X) > 0$. Then Theorem $\ref{ThA}(2)$ is true.
\end{lemma}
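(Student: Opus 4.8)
The plan is to exploit the Albanese map. Since $q(X) = h^1(X,\OO_X) > 0$, the Albanese variety $A := \Alb(X)$ is a nontrivial abelian variety, and because $rK_X \sim 0$ for some $r > 0$, by Kawamata's theorem on the structure of varieties with $\kappa(X) = 0$ and numerically trivial canonical class (or the results of Kawamata--Viehweg and the solution to the abundance-type statement in numerical dimension zero), the Albanese morphism $\alb_X : X \to A$ is surjective with connected fibres; in fact one expects $\alb_X$ to be an isotrivial fibre bundle, or at least that $\alb_X$ is equidimensional. First I would record that $\alb_X$ is $\Aut(X)$-equivariant: every $g \in \Aut(X)$ descends to an automorphism $g_A$ of $A$ (as a variety, translating the origin if necessary), since the Albanese is functorial. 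This gives a homomorphism $\Aut(X) \to \Aut(A)$, and one knows $\Aut(A)/\Aut_0(A) = \Aut(A)/A$ is an arithmetic subgroup of $\GL$ of the lattice $H_1(A,\Z)$, hence can be large; so the mere existence of this map is not yet enough.

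The key point is that the Picard number is two, which severely constrains the fibration $\alb_X$. Let $F$ be a general fibre. Pulling back an ample class from $A$ gives a nonzero nef class $\phi$ on $X$ with $\phi^{\dim F + 1} = 0$ (it is pulled back from a lower-dimensional variety), so $\phi$ spans an extremal ray of $\Nef(X)$; moreover this ray is rational. I would then invoke Lemma \ref{inv}(3): since one extremal ray of the nef cone — equivalently, since $\Nef(X)^\vee$ considerations and the fact that a boundary ray of $\Nef(X)$ here is rational — is rational, we could try to apply part (6) of Theorem \ref{ThB}. Alternatively, and more robustly, the class $\phi$ is fixed (up to scalar) by all of $\Aut^+(X)$ because it is the unique boundary ray of $\Nef(X)$ on that side; combined with the rationality one gets that $\Aut^+(X)$ acts on $\NS_\R(X)$ with a rational eigenvector of eigenvalue that must be $1$ (an algebraic integer which is a root of unity among $\{\chi(g)^{\pm1}\}$ forces $\chi(g) = 1$), so $\Aut^+(X) = \Aut(X) \cap \Bir_2^0(X)$, whence by Theorem \ref{ThB}(1),(2),(3) the group $\Aut(X)/\Aut_0(X)$ is finite. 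This puts us in case (2b) of Theorem \ref{ThA}, and by the same argument applied to $\Bir_2^+(X)$ — using that pseudo-automorphisms also preserve the fibration class since $\phi$ is movable and on the boundary of $\cMov(X)$ — we get $|\Bir_2(X):\Aut_0(X)| < \infty$ as well. If instead the relevant ray is not on the boundary we must handle the complementary case where $\alb_X$ is generically finite; then $X$ is birational (in codimension one, after running an MMP which terminates since we are in the Calabi-Yau setting) to an abelian variety, and I would argue $X$ itself is a quotient hence, being klt with $q(X) = \dim X$, is a complex torus, landing in case (2a).

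The main obstacle I anticipate is justifying precisely that the Albanese map is a fibration with the desired positivity properties, i.e., that $\alb_X^*(\text{ample})$ genuinely lands on the \emph{boundary} of $\Nef(X)$ (and of $\cMov(X)$) rather than in the interior — equivalently ruling out that $\alb_X$ is surjective and generically finite without $X$ being a torus — and dealing cleanly with the case $\dim F = 0$. This requires the structure theory for klt pairs with trivial canonical class (Kawamata's abelian-fibre-bundle theorem, or the more recent generalizations), and a short argument that a klt variety with generically finite Albanese and $q(X) = \dim X$ and $K_X \equiv 0$ is a complex torus. The remaining steps — equivariance of the Albanese, the eigenvalue/rationality bookkeeping via Theorem \ref{ThB}(4) and Lemma \ref{inv} — are routine.
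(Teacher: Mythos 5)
Your overall strategy coincides with the paper's: pass to the Albanese map and split according to whether it has positive-dimensional fibres (producing a distinguished rational boundary class and concluding via Lemma \ref{inv}) or is generically finite (forcing $X$ to be a complex torus). In the fibration case your argument is sound and in fact slightly more direct than the paper's: $\phi = \alb_X^*H$ is a nonzero nef integral class with $\phi^{\dim X}=0$, hence not big, hence it spans a rational extremal ray of the strictly convex cone $\PE(X)$, and Lemma \ref{inv}(3) applies verbatim to give $\Bir_2^+(X)=\Bir_2^0(X)$ and $|\Bir_2(X):\Aut_0(X)|<\infty$, i.e.\ case (2b). The paper instead takes the Stein factorization $X\to Y\to \Alb(X)$ and, when $X\to Y$ is not an isomorphism, uses $\rho(Y)=1<2=\rho(X)$ to produce a $\Bir(X)$-invariant class and invoke Lemma \ref{inv}(1); this has the advantage of not needing surjectivity or connectedness of fibres of $\alb_X$ as an input, whereas you lean from the outset on the structure theorem for the Albanese of a klt variety with numerically trivial canonical class --- a citable but nontrivial fact (a resolution of such an $X$ may have $\kappa=-\infty$, so Kawamata's theorem does not apply directly to a smooth model), and the ``isotrivial fibre bundle'' refinement is not needed anywhere.

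The genuine gap is in the generically finite case. Your sketch --- ``$X$ is birational (in codimension one, after running an MMP \dots) to an abelian variety'' and ``$X$ itself is a quotient hence, being klt with $q(X)=\dim X$, is a complex torus'' --- is not an argument: no MMP is relevant since $\alb_X$ is already a morphism, ``$X$ is a quotient'' is unjustified, and the clause as written omits the essential hypothesis $K_X\equiv 0$ (the blow-up of an abelian surface of Picard number one at a point is smooth with $q=\dim=2$ and $\rho=2$, yet is not a torus). You do state the correct auxiliary lemma in your final sentence, but you never prove it, and this is precisely the step the paper supplies: when the Stein factorization is trivial, $a:X\to\Alb(X)$ is finite onto its image, so $0=\kappa(X)\ge\kappa(a(X))\ge 0$ (subvarieties of complex tori have nonnegative Kodaira dimension), forcing $a(X)$ to be an abelian subvariety and, by Kawamata's characterization of abelian varieties, $a$ to be birational; a finite birational morphism onto a normal variety is an isomorphism. (Alternatively: $a$ finite surjective onto an abelian variety gives $K_X=R\ge 0$ by Riemann--Hurwitz, $K_X\equiv 0$ forces $R=0$, and purity of the branch locus makes $a$ \'etale, so $X$ is a torus.) Without one of these arguments the case you yourself flag as the main obstacle remains open.
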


\begin{proof}
Since $X$ is klt (and hence has only rational singularities)
and a complex torus contains no rational curve,
the albanese map $a = \alb_X : X \to A(X) := \Alb(X)$
is a well-defined morphism, where $\dim A(X) = q(X) > 0$; see \cite[Lemma 8.1]{Ka85}.
Further,
$\Bir(X)$ descends to
a regular action on $A(X)$, so that $a$
is $\Bir(X)$-equivariant, by the universal property of $\alb_X$.
Let $X \to Y \to A(X)$ be the Stein factorization of $a : X \to A(X)$.
Then $\Bir(X)$ descends to a regular action on $Y$.

If $X \to Y$ is not an isomorphism, then
one has that $2 = \rho(X) > \rho(Y)$, so
$\rho(Y) = 1$ and the generator of $\NS_{\R}(Y)$ gives a
$\Bir(X)$-invariant class in $\NS_{\R}(X)$. Thus
Lemma \ref{inv} applies, and Theorem \ref{ThA}(2b) occurs.

If $X \to Y$ is an isomorphism, then
the Kodaira dimensions satisfy
$\kappa(X) = \kappa(Y) \ge \kappa(a(X)) \ge 0$, by the well known fact
that every subvariety of a complex torus has non-negative Kodaira dimension.
Hence $\kappa(X) = \kappa(a(X)) = 0$, since $rK_X \sim 0$.
Thus $a$ is surjective and has connected fibres, so it is birational (cf.~\cite[Theorem 1]{Ka81}).
Hence $X \cong Y = a(X) = A(X)$, and $X$ is a complex torus.
\end{proof}

We continue the proof of Theorem \ref{ThA}(2).
By Lemma \ref{q}, we may assume that $q(X) = 0$.
This together with $rK_X \sim 0$ imply that $X$ is a klt Calabi-Yau variety in narrow sense.
$G_0 := \Aut_0(X)$ is a linear algebraic group,
by applying \cite[Theorem 3.12]{Li} to an equivariant resolution $X'$ of $X$
with $q(X') = q(X) = 0$, $X$ having only rational singularities.
The relation $rK_X \sim 0$ gives rise to the global index-one cover:
$$\hat{X} := \SSpec \, \oplus_{i=0}^{r-1} \, \OO_X(-iK_X) \to X$$
which is \'etale in codimension one, where $K_{\hat X} \sim 0$.
Every singularity of $\hat{X}$ is klt (cf.~\cite[Proposition 5.20]{KM}) and also Gorenstein, and hence canonical.
Thus $\kappa(\hat{X}) = 0$, so $\hat{X}$ is non-uniruled.
Hence $G_0 = (1)$, otherwise, since the class of $K_X$ is
$G_0$-stable, the linear algebraic group $G_0$ lifts to an action on $\hat{X}$,
so $\hat{X}$ is ruled, a contradiction. Thus Theorem \ref{ThA} (2b) or (2c) occurs (cf. Theorem \ref{ThB}).

{\it This proves Theorem $\ref{ThA}$}.

\begin{setup}
{\bf Proof of Theorem \ref{ThC}.}
It follows from the arguments in \cite[Theorem 1.4]{LP}, Theorem \ref{ThB}
and the following (replacing \cite[Theorem 2.5]{LP}):
\end{setup}

\begin{lemma}
Let $(X, \Delta)$ be a $klt$ Calabi-Yau variety in broad sense. Then
both the cones $\Nef(X)$ and $\cMov(X)$ are locally rational polyhedral inside the cone
$\BIG(X)$ of big divisors.
\end{lemma}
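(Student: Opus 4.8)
The statement is local: to say that $\Nef(X)$ (resp.\ $\cMov(X)$) is locally rational polyhedral inside $\BIG(X)$ means that for every class $\alpha$ in $\Nef(X)\cap\BIG(X)$ (resp.\ in $\cMov(X)\cap\BIG(X)$) there exist an open neighbourhood $U\ni\alpha$ in $\NS_{\R}(X)$ and a rational polyhedral cone $C$ with $\Nef(X)\cap U = C\cap U$ (resp.\ $\cMov(X)\cap U = C\cap U$). The plan is to establish this for the nef cone directly via the Cone Theorem, and then to reduce the movable cone to the nef cones of suitable small modifications. Fix a big $\R$-Cartier divisor $D$ representing $\alpha$; if $D$ is ample then $\alpha$ is interior to $\Nef(X)$ and any small $U$ works, so the content lies on the boundaries of the two cones.

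For $\Nef(X)$, the plan runs through the Cone Theorem. By Kodaira's lemma write $D\equiv A+E$ with $A$ ample and $E\geq 0$ effective $\R$-Cartier (after absorbing part of an ample class into $E$ we may also assume $E$ is big, which is what the movable case needs). For $0<\ep\ll 1$ the pair $(X,\Delta+\ep E)$ is klt, and since $K_X+\Delta\equiv 0$ we have $K_X+\Delta+\ep E\equiv\ep E$. Applying the Cone Theorem for klt pairs (\cite{KM}, \cite{BCHM}) to $(X,\Delta+\ep E)$: the $(K_X+\Delta+\ep E)$-negative extremal rays of $\NE(X)$, equivalently the $E$-negative ones, are spanned by rational curves and are locally discrete in the half-space $\{z : E\cdot z<0\}$. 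Now any extremal ray $R$ of $\NE(X)$ with $D\cdot R=0$ satisfies $E\cdot R=-A\cdot R<0$, hence is one of these rays; and for each such $R$ the ratio $(E\cdot z)/(A\cdot z)$ is $-1$, so they all lie in one fixed compact subcone of $\{z : E\cdot z<0\}$ and there are only finitely many, say $R_1,\dots,R_k$, all rational. Since $\NE(X)$ is a pointed closed cone, the face $F:=D^{\perp}\cap\NE(X)$ is the convex hull of its extremal rays, which are exactly $R_1,\dots,R_k$, so $F$ is rational polyhedral. It remains to show that for $\eta\in\NS_{\R}(X)$ of sufficiently small norm one has $D+\eta\in\Nef(X)$ if and only if $(D+\eta)\cdot R_i\geq 0$ for all $i$: one direction is clear, and for the converse one expands an arbitrary $z\in\NE(X)$ via the cone decomposition into an $E$-nonnegative part, on which $D$ (normalised against $A$) is bounded below, and the $E$-negative extremal rays, of which only finitely many have small $D$-degree by local discreteness, the others being bounded away from zero; so only the finitely many constraints coming from $R_1,\dots,R_k$ survive a small perturbation, and $\Nef(X)\cap U=\{x : x\cdot R_i\geq 0\ \forall i\}\cap U$ for a small $U\ni\alpha$.

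For $\cMov(X)$ we may assume $X$ is $\Q$-factorial (replace $X$ by a small $\Q$-factorialization, which exists by \cite{BCHM}; this only enlarges $\NS_{\R}(X)$ along a rational subspace, compatibly with $\cMov$, $\BIG$ and the hypotheses, so the statement descends). Given a big movable $\R$-Cartier divisor $D$, run a suitable $D$-MMP with scaling: it terminates because $D$ is big, and involves only flops because $D$ is movable (no divisor can be contracted), yielding a small $\Q$-factorial modification $\phi:X\dashrightarrow X_D$ on which the strict transform of $D$ is nef and big; hence $\alpha\in\phi^{*}\Nef(X_D)\subseteq\cMov(X)$. More generally $\cMov(X)\cap\BIG(X)$ is covered by the cones $\phi^{*}\Nef(X')$ as $X'$ ranges over the marked small $\Q$-factorial modifications of $X$, and by \cite{BCHM} this chamber decomposition is locally finite in $\BIG(X)$; so near $\alpha$ only finitely many $X'$, say $X_1,\dots,X_m$, occur. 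Each $\Nef(X_j)$ is locally rational polyhedral in $\BIG(X_j)=\BIG(X)$ by the nef case, and each $\phi_j^{*}$ is a rational linear isomorphism, so $\cMov(X)=\bigcup_j\phi_j^{*}\Nef(X_j)$ is rational polyhedral near $\alpha$.

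The main obstacle is the movable case, and within it the two inputs borrowed from \cite{BCHM}: that a big movable divisor acquires a nef (and big) model after a \emph{small} $\Q$-factorial modification --- termination of the relevant MMP, together with the exclusion of divisorial contractions forced by movability --- and that the resulting decomposition of $\cMov(X)\cap\BIG(X)$ into nef cones of marked models is locally finite inside $\BIG(X)$. The nef case is comparatively soft; the only real point there is the compactness argument showing that a big nef class is cut out, in a neighbourhood, by the finitely many extremal rays of its supporting face.
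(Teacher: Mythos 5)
Your argument is correct in substance, but it follows a genuinely different route from the paper. The paper argues pointwise and more directly: given a big class $D$ in $\cMov(X)$ (or $\Nef(X)$), it rescales so that $(X,\Delta+D)$ is klt, applies \cite[Theorem 1.2]{BCHM} to obtain a log terminal model reached by flips only (divisorial contractions being excluded exactly as in your argument, since $D$ is movable), then invokes the basepoint-free theorem \cite[Theorem 3.9.1]{BCHM} to make the transformed class semi-ample and big, pulls back the resulting ample model and writes the ample divisor as a positive combination of ample Cartier divisors; transporting back, $D$ is a positive $\R$-combination of rational movable (resp.\ nef) Cartier classes, which yields the local rational polyhedrality needed (in the Picard-number-two application: any boundary ray meeting the big cone is rational). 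A second proof indicated by the referee uses the rationality theorem \cite[Theorem 3.5, Complement 3.6]{KM}. By contrast, you treat $\Nef(X)$ by the classical cone-theorem argument (discreteness of $(K_X+\Delta+\ep E)$-negative extremal rays plus the compactness of the $D$-trivial locus, in the style of Kawamata and Totaro), and $\cMov(X)$ by passing to a small $\Q$-factorialization, running a $D$-MMP consisting of flops, and invoking local finiteness of the decomposition of $\cMov(X)\cap\BIG(X)$ into nef cones of marked small $\Q$-factorial modifications. Your route gives finer structural information (explicit finiteness of the $D$-trivial extremal rays, the chamber structure of the movable cone), whereas the paper's route is shorter and needs only existence of log terminal models plus basepoint-freeness, avoiding both the discreteness analysis and any finiteness-of-models statement. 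Two caveats on your version: the local finiteness of the chamber decomposition is not a literal theorem of \cite{BCHM} --- it is a standard but nontrivial consequence of the finiteness of ample models (\cite[Corollary 1.1.5]{BCHM}), and the deduction (choosing a rational polytope of effective big divisors with a fixed ample part representing classes near $\alpha$, where numerical and $\R$-linear equivalence may be interchanged since $K_X+\Delta\equiv 0$) should be sketched or precisely cited; and in the movable case you should, as the paper does, replace $D$ by a small multiple so that $(X,\Delta+D)$ is klt before running the MMP.
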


\begin{proof}
Let $D \in \cMov(X) \cap \BIG(X)$. Since $(X, \Delta)$ is klt
and klt is an open condition,
replacing $D$ by a small multiple,
we may assume that $(X, \Delta + D)$ is klt. By \cite[Theorem 1.2]{BCHM},
there is a composition $\sigma: X \dasharrow X_1$ of divisorial and flip birational contractions
such that $(X_1, \Delta_1 + D_1)$ is klt and $K_{X_1} + \Delta_1 + D_1$ is nef; here
$\Delta_1 := \sigma_*\Delta_1$, $D_1 := \sigma_*D$, and
$K_{X_1} + \Delta_1 = \sigma_*(K_X + \Delta) \equiv 0$.
Since $K_X + \Delta + D \equiv D \in \cMov(X)$, $\sigma$ consists only of flips, so $D = \sigma^*D_1$.
By \cite[Theorem 3.9.1]{BCHM}, $(K_{X_1} + \Delta_1) + D_1$ ($\equiv D_1$)
is semi-ample (and big),
so it equals $\tau^*D_2$, where $\tau : X_1 \to X_2$ is a birational morphism
and $D_2$ is an $\R$-Cartier ample divisor.
Write $D_2 = \sum_{i=1}^s c_i H_i$ with $c_i > 0$ and $H_i$ ample and Cartier.
Then $D \equiv \sum_{i=1}^s c_i \sigma^*\tau^*H_i$ with $\sigma^*\tau^*H_i$ movable and Cartier.
We are done (letting $\sigma = \id$ when $D \in \Nef(X) \cap \BIG(X)$).
Alternatively, as the referee suggested,
in the case when $D_2$ lies on the boundary
of the movable cone, fix a rational effective
divisor $E$ close to $D_2$ outside the movable
cone - but still inside the big cone. Then,
for $\varepsilon \in \Q_{\ge 0}$ small enough,
$\varepsilon E \equiv K_X + \Delta + \varepsilon E$
is klt and a rational divisor. Taking $H$ an ample divisor,
the rationality theorem in \cite[Theorem 3.5, and Complement 3.6]{KM} shows that
the ray spanned by $D_2$ is rational.
\end{proof}

\end{document}